\theoremstyle{plain}
\newtheorem{prop}{Proposition}[section]
\newtheorem{lem}[prop]{Lemma}
\newtheorem{thm}[prop]{Theorem}
\theoremstyle{definition}
\newtheorem{mydef}[prop]{Definition}
\newtheorem{conj}[prop]{Conjecture}
\theoremstyle{remark}
\newtheorem{remark}[prop]{Remark}
\DeclareMathOperator{\supp}{supp}
\newcommand\N{\mathbb{N}}
\newcommand\R{\mathbb{R}}
\begin{document}
\title{Exceptional sets in homogeneous spaces and Hausdorff dimension.}
\author{Shirali\@ Kadyrov}

\address{Department of Mathematics,
Nazarbayev University,
Astana, Kazakhstan.}
\email{shirali.kadyrov@nu.edu.kz}
\keywords{Exponential mixing, Homogeneous dynamics, Hausdorff dimension, Open dynamics}
\subjclass[2010]{Primary: 37A17, Secondary: 11K55, 37A25}

\begin{abstract}
In this paper we study the dimension of a family of sets arising in open dynamics. We use exponential mixing results for diagonalizable flows in compact homogeneous spaces $X$ to show that the Hausdorff dimension of set of points that lie on trajectories missing a particular open ball of radius $r$ is at most 
$$\dim X + C\frac{r^{\dim X}}{\log r},$$
where $C>0$ is a constant independent of $r>0$. Meanwhile, we also describe a general method for computing the least cardinality of open covers of dynamical sets using volume estimates.

\end{abstract}

\maketitle

\section{Introduction}
As a subbranch of dynamical systems, open dynamical systems is an active research area. A typical example is the dynamics of a billiard ball on a table with holes. For some of the recent developments in open dynamics research we refer to \cite{BDM10, BY11,DWY10,FP12, Kel12, KL09}. In this article, we would like to study the Hausdorff dimension of the open systems with holes arising in homogeneous spaces. Let $G$ be a Lie group and $\Gamma$ a uniform lattice in $G$. Consider the compact homogeneous space $X=G/\Gamma$ and one parameter semigroup $(g_t)_{t \ge 0}$ of diagonalizable elements of $G$ acting on $X$ by left translations. In many situations, using uniqueness of the measure of maximal entropy and variational principle for topological entropy one can show that the Hausdorff dimension of the set of trajectories under $g_t$ that miss a fixed open set is strictly less than $\dim G$. Our goal in this paper is to find an effective bound for the Hausdorff dimension of such sets. We fix a right invariant Riemannian metric $d$ on $G$ and use the same notation $d$ for the induced metric on $X$. Let $H$ be the unstable subgroup of $G$ w.r.t. $g_1$, that is,
$$H:=\{g \in G : d(g_n g g_{-n},e) \to 0 \text{ as } n\to -\infty\},$$
where $e \in G$ is the identity element. Let $\nu$ be the Lebesgue measure on $H$, $\mu$ the probability Haar measure on $X$, $\|\cdot\|_\ell$  a Sobolev norm in $W_\ell^2(X)$, and $\| \cdot\|$ is the maximum norm.

\begin{mydef}\label{prop:EM} We say that a flow $(X,\{g_t\})$ has \emph{property (EM)} if
there exist $D,\lambda, k,\ell>0$ such that for any $x \in X$, $f \in C_c^\infty (H)$, and $\psi \in C^\infty(X)$ such that the map $g \to g x$ is injective on some ball in $G$ containing $\supp(f)$, and for any $t \ge 0$ one has
$$\left|\int_Hf(h) \psi(g_t h x) d\nu(h)-\int_H fd\nu(h) \int_X\psi d\mu \right| \le {\rm const}(f,\psi) e^{-\lambda t},$$
where
$$ {\rm const}(f,\psi)=D \|f\|_\ell \|\psi\|_\ell \left(\max_{x \in X}\|\nabla \psi\| \int_H |f| d\nu \right)^k.$$
\end{mydef}

Our definition of property $(EM)$ is a consequence, (c.f. \cite[Proposition~2.4.8]{KleMar}), of property $(EM)$ defined in \cite{KleMar}. We note that the $\text{const}(f,\psi)$ is not stated explicitly in \cite{KleMar}, however as pointed out in \cite{BK13}, one easily gets the above explicit constant.

From \cite[Lemma~2.4.1]{KleMar} we see that, in particular, if $\Gamma$ is an irreducible lattice in $G$ and $G$ has an essential factor $G'$ which is not locally homeomorphic to $SO(m,1)$ or $SU(m,1), m \in \N$ then the property $(EM)$ holds for the flow $(X, \{g_t\})$.
 
For any metric space $(Y,d)$ we define open balls by
$$
B^Y(x,r):=\{y \in Y : d(x,y)<r\}.$$
When $Y$ is a group with identity $e$ we simply let $B^Y(r):=B^Y(e,r).$ 
Since $X$ is compact we can pick a constant $r_0\in (0,1)$ such that for any $x \in X$ the map $g \to g x$ is injective on $B^G(r_0)$. We consider $\Gamma=e \Gamma$ as an identity coset in $X$ and for simplicity write $B^X(r)$ for $B^X(\Gamma,r)$. Our main result is the following.

\begin{thm}\label{thm:main'} Let $G,\Gamma,g_t,\mu,r_0$ be as above. Assume that the flow $(X, \{g_t\})$ has property (EM). Then, there exists a constant $C>0$ such that for any $r\in (0,r_0)$ and $x_0 \in X$ the set
\begin{equation}\label{eqn:exceptionalset}
\{x \in X : g_t x\not \in B^X(x_0,r) \text{ for any } t \ge 0 \}
\end{equation}
has Hausdorff dimension at most 
$$\dim G + C \frac{\mu(B^X( r))}{\log \mu(B^X(r))}.$$
\end{thm}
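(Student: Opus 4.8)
The plan is to bound the upper box-counting dimension of the set in \eqref{eqn:exceptionalset} (which dominates the Hausdorff dimension) by combining two ingredients: a general estimate controlling the cardinality of an economical cover of a set $S$ by the volume of a neighborhood of $S$, and an effective decay estimate for the measure of the set of points whose orbit avoids the hole up to a finite time $T$. For the first ingredient I would record the elementary fact that if $N(S,\rho)$ is the minimal number of $\rho$-balls covering $S$, then a maximal $\rho$-separated subset of $S$ produces disjoint $\rho/2$-balls inside the $\rho/2$-neighborhood $S^{(\rho/2)}$, so that
\begin{equation*}
N(S,\rho)\ \le\ \frac{\mu\bigl(S^{(\rho/2)}\bigr)}{\min_{x\in X}\mu\bigl(B^X(x,\rho/2)\bigr)}\ \lesssim\ \rho^{-\dim G}\,\mu\bigl(S^{(\rho/2)}\bigr),
\end{equation*}
using $\mu(B^X(x,\rho))\asymp\rho^{\dim G}$. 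This is the ``volume method'' for covers alluded to in the abstract.

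For the effective estimate, write $E_T(r')=\{x: g_tx\notin B^X(x_0,r')\text{ for all }t\in[0,T]\}$, so the set of interest is $E_\infty(r)\subseteq E_T(r)$. Requiring avoidance only at the sampled times $0,\Delta,\dots,N\Delta$ with $N=\lfloor T/\Delta\rfloor$ merely enlarges the set, so it suffices to bound the measure of the discretely surviving set. I would replace the indicator of $X\setminus B^X(x_0,r')$ by a smooth $\bar\chi\in C^\infty(X)$, equal to $1$ off the ball and $0$ on a slightly smaller ball, smoothed at scale $\delta\asymp r'$, so that $\int_X\bar\chi\,d\mu\le 1-\tfrac12\mu(B^X(r'))$ while $\|\bar\chi\|_\ell$ and $\max\|\nabla\bar\chi\|$ grow only polynomially in $1/r'$. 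Fixing a bump $f\in C_c^\infty(H)$ on an $r_0$-ball and applying property (EM) repeatedly—pushing the expanding unstable leaf forward by $g_\Delta$, re-covering it by unit leaves, and iterating—I expect to obtain, for each $x$,
\begin{equation*}
\int_H f(h)\prod_{j=0}^{N}\bar\chi(g_{j\Delta}hx)\,d\nu(h)\ \lesssim\ \Bigl(1-\tfrac12\mu(B^X(r'))+D\,e^{-\lambda\Delta}\Bigr)^{\!N}.
\end{equation*}
Choosing $\Delta\asymp\lambda^{-1}\bigl|\log\mu(B^X(r'))\bigr|$ makes the mixing error $De^{-\lambda\Delta}$ dominated by $\mu(B^X(r'))$ and, crucially, swallows the polynomial Sobolev factors in $\mathrm{const}(f,\psi)$. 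A Fubini decomposition into unstable leaves then yields
\begin{equation*}
\mu\bigl(E_T(r')\bigr)\ \lesssim\ \exp\!\Bigl(-c\,\frac{\mu(B^X(r'))}{\bigl|\log\mu(B^X(r'))\bigr|}\,T\Bigr),
\end{equation*}
i.e. an escape rate of order $\mu(B^X(r'))/|\log\mu(B^X(r'))|$.

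To combine the two, note that $g_t$ is globally Lipschitz with constant at most $e^{\kappa t}$ for the top expansion rate $\kappa>0$, so a point within $\rho$ of some $x\in E_T(r)$ stays within $e^{\kappa T}\rho$ of $g_tx$ for all $t\le T$; setting $\rho=\tfrac{r}{2}e^{-\kappa T}$, so that $e^{\kappa T}\rho=r/2$, gives $\bigl(E_T(r)\bigr)^{(\rho)}\subseteq E_T(r/2)$. Feeding this into the covering bound with $S=E_\infty(r)\subseteq E_T(r)$ and using $\mu(B^X(r/2))\asymp\mu(B^X(r))$ yields
\begin{equation*}
N\bigl(E_\infty(r),\rho\bigr)\ \lesssim\ \rho^{-\dim G}\,\mu\bigl(E_T(r/2)\bigr)\ \lesssim\ r^{-\dim G}\exp\!\Bigl(T\bigl[\kappa\dim G-c'\tfrac{\mu(B^X(r))}{|\log\mu(B^X(r))|}\bigr]\Bigr).
\end{equation*}
Since $\log(1/\rho)=\kappa T+O_r(1)$, letting $T\to\infty$ (hence $\rho\to0$) in $\log N/\log(1/\rho)$ gives the bound $\dim G-\tfrac{c'}{\kappa}\,\mu(B^X(r))/|\log\mu(B^X(r))|$, which is the assertion with $C=c'/\kappa$.

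The technical heart, and the step I expect to be hardest, is the iterated application of property (EM) in the second paragraph: one must smooth the indicator at a scale tied to $r$, propagate the resulting polynomially large Sobolev norms and the nonlinear factor $\bigl(\max\|\nabla\psi\|\int_H|f|\,d\nu\bigr)^k$ through $N\asymp\lambda T/|\log\mu(B^X(r))|$ successive steps, and verify that the accumulated additive errors never overwhelm the multiplicative gain $1-\tfrac12\mu(B^X(r))$ per step. Balancing $\Delta$ (large enough to kill the errors, small enough to retain many steps) against the smoothing scale $\delta$ is precisely what forces $|\log\mu(B^X(r))|$ into the denominator of the final rate, and getting this bookkeeping right—rather than the subsequent geometric covering argument—is the crux of the proof.
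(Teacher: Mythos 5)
Your proposal is correct in outline and lands on the right bound, but it packages the argument differently from the paper. The paper never forms a global escape-rate estimate $\mu(E_T(r'))\lesssim e^{-cT\mu/|\log\mu|}$: instead it works entirely on the unstable leaf. Its Proposition~2.1 is a purely combinatorial covering statement -- the set of $h\in B(r)$ whose sampled orbit survives $k$ steps is covered by $(\sup_{x'}\nu(E_1(t,2r,\partial_rK,x'))/\nu(g_{-t}B(r/2)g_t))^k$ Bowen $(tk,r)$-balls, proved by renormalizing each Bowen ball back to time zero -- and property (EM) is invoked exactly once per time step, via smoothed indicators at scale $e^{-\lambda' t}$, to bound the one-step ratio by $1-D'r^m+E'r^{-n}e^{-\lambda' t}$ with $t=\frac{m+n+p}{\lambda'}\log\frac1r$ (your $\Delta\asymp\lambda^{-1}|\log\mu(B^X(r))|$, chosen for the same reason: to swallow the polynomial Sobolev constants). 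Bowen balls are then converted to round balls using $\lambda_0$, and the stable/neutral directions are handled at the very end through the local product decomposition $G\approx H^-H^0H$ plus countable stability, rather than through your ambient-volume bound $N(S,\rho)\lesssim\rho^{-\dim G}\mu(S^{(\rho/2)})$ and the inclusion $(E_T(r))^{(\rho)}\subseteq E_T(r/2)$. Your route buys a cleaner conceptual statement (an effective escape rate, and an upper rather than lower box-dimension bound for the full set in $X$ in one stroke); the paper's leafwise route buys simpler bookkeeping, since the iteration lives at the level of covering numbers and never requires a Fubini decomposition of $\mu$ into unstable leaves.

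One caveat on the step you rightly flag as the crux: the displayed inequality $\int_H f\prod_{j=0}^N\bar\chi(g_{j\Delta}hx)\,d\nu\lesssim(1-\tfrac12\mu(B)+De^{-\lambda\Delta})^N$ cannot be obtained by applying (EM) $N$ times directly to that single integral, because after peeling one factor the remaining product $\prod_{j\ge1}\bar\chi(g_{j\Delta}\,\cdot\,)$ is not an admissible test function $\psi$ with controlled Sobolev norm. The rigorous version of your ``push forward, re-cover by unit leaves, iterate'' is precisely the paper's renormalization: cover the one-step survivor set by Bowen pieces, restart on each piece with a new base point $x''\in\partial_rK$, and take a supremum over base points -- so the quantity that actually multiplies is a covering count (or a sup over base points), not the single leafwise integral. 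Since your sketch explicitly invokes this re-covering mechanism and the uniformity of (EM) in $x$ makes the sup harmless, I regard this as the same underlying argument needing the same lemma, not a gap in the strategy; but as written the product inequality is the one assertion that would not survive without that combinatorial scaffolding. (A further minor point: if $\mathrm{Ad}_{g_1}$ has nontrivial Jordan blocks, the Lipschitz constant of $g_t$ is $e^{\kappa t}$ only up to polynomial factors; taking any $\kappa>\lambda_0$ absorbs this and only changes your constant $C=c'/\kappa$, and the paper's implicit constants in the Bowen-ball conversion hide the same issue.)
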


Similar results were studied in different contexts, see e.g. \cite{FP12, KL09, BK13, Kad13, Urb87,Hen92}. It is desired to obtain the similar results when $\Gamma$ is not necessarily uniform. However, to derive nontrivial estimates from exponential mixing we need to assume that the complement of an open ball in $X$ is compact which is not the case if $X$ is not compact. We will make no claim on sharpness of our result. In fact, we think the following should hold true.

\begin{conj}
Let $G$ be a Lie group, $\Gamma$ a lattice, $\{g_t\}$ a one parameter diagonalizable subgroup of $G$ acting on $X=G/\Gamma.$ Assume that the Haar measure $\mu$ on $X$ is the unique measure of maximal entropy. Then, there exists a constant $C>0$ such that for any $x_0 \in X$ and $r>0$ such that the map $g \to g x_0$ is injective on $B^G(r)$, the set \eqref{eqn:exceptionalset} has Hausdorff dimension at most $\dim G - C  \mu(B^X(r)).$
\end{conj}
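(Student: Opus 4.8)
The plan is to keep the geometric reduction underlying Theorem~\ref{thm:main'} and to replace its quantitative input, property (EM), by a \emph{quantitative} form of the uniqueness of the measure of maximal entropy. Write $A=B^X(x_0,r)$ for the hole and, for $T>0$, set $E_T=\{x\in X: g_tx\notin A\ \text{for}\ 0\le t\le T\}$, so that the set in \eqref{eqn:exceptionalset} is $\bigcap_{T>0}E_T$. The unstable subgroup $H$ is expanded by $g_t$ with positive Lyapunov exponents $\lambda_1\le\cdots\le\lambda_{\dim H}$, and a forward Bowen ball of order $T$ is a box of size $\approx\varepsilon$ in the stable and neutral directions and of size $\approx e^{-\lambda_iT}\varepsilon$ along the $i$-th unstable direction, of $\mu$-measure $\approx\varepsilon^{\dim G}e^{-T\sum_i\lambda_i}$; the survivor set is ``full'' in the stable and neutral directions and loses dimension only along $H$. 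The volume-to-covering method announced in the abstract then turns an escape-rate estimate into a dimension bound: if one can show
\[
\mu(E_T)\le \mathrm{const}\cdot e^{-\eta T}\qquad(T\to\infty)
\]
for some $\eta=\eta(A)>0$, then covering $E_T$ by the order-$T$ Bowen boxes at their finest scale $\rho=e^{-\lambda_{\dim H}T}\varepsilon$ gives a cover with $\sum(\operatorname{diam})^s\approx\mu(E_T)\,\rho^{\,s-\dim G}$, whence $\dim_H\!\big(\bigcap_T E_T\big)\le \dim G-\eta/\lambda_{\dim H}$. Thus the whole problem reduces to the sharp lower bound $\eta(A)\ge C\mu(A)$, with \emph{no} logarithmic loss.

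First I would recast this escape rate in the thermodynamic language already invoked in the introduction. By a variational principle for open systems, $\tfrac1T\log\mu(E_T)$ is governed by the invariant measures giving no mass to $A$, and one expects the escape rate to equal the entropy deficit
\[
\eta(A)=h_{\mathrm{top}}(g_1)-\sup\{h_\nu(g_1): \nu\ g_1\text{-invariant},\ \nu(A)=0\}.
\]
Under the hypothesis that Haar measure is the \emph{unique} measure of maximal entropy, this supremum is strictly below $h_{\mathrm{top}}(g_1)$, which already recovers the qualitative bound $\dim_H<\dim G$ recalled in the introduction. The content of the conjecture is then the \emph{quantitative} entropy gap
\[
h_{\mathrm{top}}(g_1)-\sup_{\nu(A)=0}h_\nu(g_1)\ \ge\ C\,\mu(A),
\]
i.e. forbidding a set of measure $\mu(A)$ must cost a proportional amount of entropy. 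Since $\sum_i\lambda_i=h_{\mathrm{top}}(g_1)$ for the homogeneous flow, establishing this gap and feeding it into the covering estimate above yields exactly $\dim G-C\mu(A)$.

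To produce the gap I would aim for a quantitative intrinsic ergodicity, namely a spectral gap for the relevant transfer/pressure operator, and then apply the perturbation theory of open systems (in the spirit of Keller--Liverani and of escape-rate asymptotics through small holes), for which the leading behaviour is $\eta(A)\sim\mu(A)$ up to a return-probability correction bounded away from $1$ for a genuine ball. Two features of the present setting demand extra work. Because $\Gamma$ need no longer be uniform, $X$ is non-compact and the complement of $A$ is not compact, so both the covering count and the entropy estimate must be accompanied by non-divergence and height-function control of Dani--Margulis type to preclude escape of mass into the cusp; this is precisely the point flagged in the introduction where compactness was used. Moreover, the anisotropy of the Bowen boxes and a possibly non-trivial Lyapunov spectrum on $H$ mean that the passage from boxes to metric balls should be carried out scale by scale to extract the sharp constant $C$.

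The main obstacle is the quantitative entropy gap, equivalently the sharp escape rate $\eta(A)\gtrsim\mu(A)$ itself. Property (EM) is strong enough only to give escape at a rate weaker by a factor $|\log\mu(A)|$ — the source of the logarithm in Theorem~\ref{thm:main'}, arising from the need to wait a time $\approx|\log\mu(A)|$ between effectively independent samplings of the orbit — and removing this factor requires genuine thermodynamic formalism with a uniform spectral gap rather than a single correlation-decay rate. Upgrading the mere uniqueness of the measure of maximal entropy to such quantitative control, uniformly as the hole shrinks and in the non-compact setting, is exactly what current methods do not provide, which is why the statement remains conjectural.
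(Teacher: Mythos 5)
There is a genuine gap, and it is the central one: the statement you set out to prove is, in the paper, precisely a \emph{conjecture} --- the author explicitly makes no claim of sharpness, offers no proof, and even notes that in this generality it is not known whether the set \eqref{eqn:exceptionalset} has dimension strictly less than $\dim G$. Your text is accordingly not a proof but a reduction. Everything is funneled into the quantitative entropy gap, equivalently the sharp escape rate $\eta(A)\ge C\mu(A)$ with no logarithmic loss, and this step is never established; you yourself concede in the last paragraph that ``current methods do not provide'' it. A proposal whose key lemma is ``prove the conjectured gap'' has restated the conjecture in thermodynamic language (a useful and, I think, correct reformulation, and one that accurately diagnoses the source of the $\log$ factor in Theorem~\ref{thm:main'}), but it has not proved anything beyond what the paper's Theorem~\ref{thm:main'} already gives.

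If you pursue this program, three subsidiary gaps also need attention. First, the passage from $\mu(E_T)\le \mathrm{const}\cdot e^{-\eta T}$ to a covering count is not automatic: a Bowen box meeting the survivor set need not lie in it, so you need a separated-set and thickening argument as in Lemma~\ref{lem:ratio} (replacing the hole $A$ by a slightly shrunk ball), on top of the scale-by-scale anisotropic box-to-ball conversion you flag; moreover the naive bound $\dim G-\eta/\lambda_{\dim H}$ uses only the largest exponent and would have to be sharpened to extract the constant $C$. Second, the identity ``escape rate $=$ entropy deficit'' is not a theorem here: for open systems one generally has only an inequality absent specification-type hypotheses, and for nonuniform $\Gamma$ the complement of $A$ is noncompact, so both the variational principle for open systems and the Keller--Liverani perturbation framework (built around quasi-compact transfer operators) do not apply as stated; Dani--Margulis nondivergence controls individual orbits, not the entropy of invariant measures charging the cusp while avoiding $A$. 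Third, uniqueness of the measure of maximal entropy is a purely qualitative hypothesis; nothing in it yields a spectral gap or constants uniform as $r\to 0$, which is exactly the upgrade your argument requires. So your diagnosis of the obstruction is sound and consistent with the paper's own discussion, but the proposal leaves the conjecture open.
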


In this generality, it is not known if the Hausdorff dimension of the set \eqref{eqn:exceptionalset} is strictly less than $\dim G$. This was observed in \cite{EKP13} for $X=G/\Gamma$ when $G$ is of $\R$-rank one.

We will adopt some of the ideas from \cite{BK13} where the Hausdorff dimensions of the level sets of uniformly badly approximable systems of linear forms were estimated. In the next section we will prove some known results in a more general setting. In \S~\ref{sec:EM} we use the property $(EM)$ to obtain Theorem~\ref{thm:main'}.

\subsection*{Acknowledgement}
The author would like to thank the referee for useful comments.

\section{Auxiliary results}\label{sec:auxiliary}

This section may be of independent interest. In certain cases, it may be possible to estimate the measure of certain subsets arising in ergodic theory and one may wish to study the covering of these sets by certain balls. In this section we provide a general version of some well known results useful in such occasions. See e.g. \cite{KleMar} where tessellation method was introduced for such purposes. 

Let $G$ be a Lie group and $\Gamma$ a discrete subgroup of $G$ and set $X=G/\Gamma$. Let $\{g_t\}$ be a one parameter diagonalizable semigroup and $H$ be the unstable subgroup w.r.t. $g_1$. As before, let us fix a right invariant Riemannian metric $d$ on $G$ and to simplify the notation let $B(r):=B^H(r)$. For any set $K$ of $X$ we let $\partial_r K$ denote the thickening of $K$ by $r$, that is, $\partial_r K=\{x \in X: d(x,K)\le r\}$. 
Let $\nu$ be the Haar measure on $H$. By Bowen $(t,r)$-ball in $H$ we mean any translate of $g_{-t} B(r) g_t$ in $H$.

\begin{prop}\label{prop:exceptionalk}
For any $r,t \ge 0$, a subset $K$ of $X$, a point $x \in K$, and a natural number $k \in \N,$ let $E_k(t,r,K,x)$ denote the set given by
\begin{equation}\label{eqn:exeptionalk}
E_k(t,r,K,x):=\{h \in B(r): g_{t\ell} h x  \in K, \ell=1,2,\dots,k\}.
\end{equation} 
We have that the set $E_k(t,r,K,x)$ can be covered with at most
$$\sup_{x' \in \partial_r K} \left(\frac{\nu(E_1(t,2r,\partial_r K, x'))}{\nu(g_{-t}B(r/2) g_t)}\right)^k$$
Bowen $(tk,r)$-balls in $H$.
\end{prop}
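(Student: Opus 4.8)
The plan is to argue by induction on $k$, with the case $k=1$ carrying the real geometric content and the inductive step being a dynamical renormalization that merely multiplies the covering count by one more factor of $M:=\sup_{x'\in\partial_r K}\nu(E_1(t,2r,\partial_r K,x'))/\nu(g_{-t}B(r/2)g_t)$. Throughout I would use that $g_{-t}$ contracts $H$ (so $g_{-t}B(r)g_t$ is a genuinely small neighbourhood of $e$), that $\nu$ is invariant under left translation, and the right-invariance of $d$, which yields both the submultiplicativity $B(r/2)B(r/2)\subseteq B(r)$ and the symmetry $B(r/2)^{-1}=B(r/2)$.

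For the base case I would cover $E_1(t,r,K,x)$ by Bowen $(t,r)$-balls through a maximal-packing (disjointification) argument. Choose a maximal family $h_1,\dots,h_N\in E_1(t,r,K,x)$ for which the half-size Bowen balls $h_i\,g_{-t}B(r/2)g_t$ are pairwise disjoint. Maximality forces the full-size balls $h_i\,g_{-t}B(r)g_t$ to cover $E_1(t,r,K,x)$: any point of the set meets some $h_i$ at the half scale, and the inclusion $B(r/2)B(r/2)\subseteq B(r)$ together with $B(r/2)^{-1}=B(r/2)$ upgrades this to membership in $h_i\,g_{-t}B(r)g_t$. To bound $N$, I would show each disjoint half-ball $h_i\,g_{-t}B(r/2)g_t$ lies inside $E_1(t,2r,\partial_r K,x)$: its center maps under $g_t$ into $K$, while the ball has radius comparable to $r$ and sits inside $B(2r)$, so every point maps into the $r$-thickening $\partial_r K$. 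Summing volumes and using left-invariance of $\nu$ gives $N\,\nu(g_{-t}B(r/2)g_t)\le\nu(E_1(t,2r,\partial_r K,x))$, hence $N\le M$ since $x\in K\subseteq\partial_r K$.

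For the inductive step I would exploit the self-similarity $g_{-tk}B(r)g_{tk}=g_{-t}\big(g_{-t(k-1)}B(r)g_{t(k-1)}\big)g_t$ of Bowen balls. Covering $E_k(t,r,K,x)\subseteq E_1(t,r,K,x)$ first by the $\le M$ Bowen $(t,r)$-balls $h_i\,g_{-t}B(r)g_t$ from the base case, I would then fix one such ball and parametrise its points as $h=h_i\,g_{-t}b\,g_t$ with $b\in B(r)$. The constraints at times $2t,\dots,kt$ depend on $h$ only through the renormalized point $g_t h x$, so the intersection of $E_k(t,r,K,x)$ with this Bowen ball becomes, after applying $g_t$ and re-centering at $y_i:=g_t h_i x\in K$, a copy of an $E_{k-1}$-problem based at $y_i$. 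The inductive hypothesis covers it by $M^{k-1}$ Bowen $(t(k-1),r)$-balls, and the displayed identity pulls each of these back to a Bowen $(tk,r)$-ball in $H$. Multiplying the two counts yields $M\cdot M^{k-1}=M^k$, as claimed.

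The main obstacle is the bookkeeping forced by $d$ being only right-invariant: left translations and, crucially, the conjugations $c_i:=g_t h_i g_{-t}$ arising in the renormalization distort the balls $B(r)$, so the renormalized domain is a conjugate $c_i B(r)c_i^{-1}$ rather than $B(r)$ exactly, and re-centering moves base points by up to $r$. Controlling these distortions uniformly is precisely what produces the thickening $K\rightsquigarrow\partial_r K$, the radius doubling $r\rightsquigarrow 2r$, and the half-radius $r/2$ in the statement; I expect it to rest on the contraction of $g_{-t}$ along $H$ together with the fact that all centers remain in a bounded region. Once those uniform comparisons are in place, taking the supremum over base points $x'\in\partial_r K$ makes the single constant $M$ valid at every stage of the induction, and the product bound closes.
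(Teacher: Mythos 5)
Your skeleton---a maximal-packing argument at scale $r/2$ for the base case, followed by an induction that renormalizes by the flow and multiplies the count by one more factor of $M$---is exactly the architecture of the paper (Lemma~\ref{lem:ratio} plus the induction in the proof of Proposition~\ref{prop:exceptionalk}; the paper peels off the \emph{last} time step where you peel off the first, but that is a mirror image and both close). The genuine gap is in the bookkeeping with the right-invariant metric, which you yourself flag as ``the main obstacle'' and then propose to resolve incorrectly. You translate Bowen balls on the wrong side: you write $h_i\,g_{-t}B(r/2)g_t$, whereas with a right-invariant $d$ the set $\{h' : d(g_t h', g_t h_i)<r\}$ is $g_{-t}B(r)g_t\,h_i$---Bowen set on the left, center on the right. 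With your convention, checking that a point $h'=h_i b$ of a half-ball satisfies $g_t h' x \in \partial_r K$ requires bounding
$d(g_t h_i b,\, g_t h_i)=d\big((g_t h_i g_{-t})(g_t b g_{-t})(g_t h_i g_{-t})^{-1},\,e\big)$,
i.e.\ controlling conjugation by $c_i=g_t h_i g_{-t}$. Since $H$ is the \emph{unstable} subgroup, $c_i$ grows exponentially in $t$, so your proposed control (``contraction of $g_{-t}$ along $H$ together with the fact that all centers remain in a bounded region'') fails: the distortion of $c_i B(r) c_i^{-1}$ is unbounded in $t$, and no fixed thickening $K\rightsquigarrow\partial_r K$ or doubling $r\rightsquigarrow 2r$ can absorb it. The same defect infects your inductive step, where the renormalized domain really would be the uncontrolled conjugate $c_i B(r)c_i^{-1}$.

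The repair is not to control the conjugation but to make it vanish: with Bowen balls written as $g_{-t}B(r)g_t\,h$, right-invariance gives $d(g_t h',g_t h)=d(g_t h' h^{-1} g_{-t},e)$ directly, so $h'=g_{-t}\beta g_t h$ with $\beta\in B(r)$ satisfies $d(g_t h' x, g_t h x)\le d(\beta,e)<r$ with no distortion at all; this, together with $g_{-t}B(r)g_t B(r)\subset B(2r)$, is the actual source of the thickening and the radius doubling in the statement. Likewise the renormalization should be $D\mapsto g_{t(k-1)}\,D\,h^{-1}g_{-t(k-1)}$, as in the paper, which maps the Bowen $(t(k-1),r)$-ball \emph{exactly} onto $B(r)$, so no conjugate of $B(r)$ ever appears. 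One further point your sketch glosses over: in the paper's peel-the-last-step induction the new base point $x''=g_{t(k-1)}hx$ is only a Bowen-ball center, not a point of $E_{k-1}$, so one must separately verify $x''\in\partial_r K$ (using that the intersection $D$ is nonempty); in your peel-the-first-step variant the new base points $y_i=g_t h_i x$ do lie in $K$ itself, which is fine---but only after the side convention is corrected so that the half-ball containments and the identification of the renormalized set with $E_{k-1}(t,r,K,y_i)$ actually go through.
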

 
Proposition~\ref{prop:exceptionalk} follows inductively from the following.

\begin{lem}\label{lem:ratio}
For any $r,t \ge 0$, a point $x \in X$ and a subset $K$ of $X$ we have that the set $E_1(t,r,K,x)$ can be covered with 
$$\frac{\nu(E_1(t,2r,\partial_r K, x))}{\nu(g_{-t}B(r/2) g_t)}$$
Bowen $(t,r)$-balls in $H$.
\end{lem}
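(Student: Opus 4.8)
The plan is to run a greedy packing-and-covering argument adapted to the flow, measuring everything against the reference set $g_{-t}B(r/2)g_t$. First I would choose a maximal collection $h_1,\dots,h_N\in E_1(t,r,K,x)$ whose Bowen $(t,r/2)$-balls are pairwise disjoint; concretely, I identify the Bowen $(t,\rho)$-ball centred at $h_i$ with the right translate $g_{-t}B(\rho)g_t\,h_i$, which is indeed a translate of $g_{-t}B(\rho)g_t$ as the definition requires. Such a maximal family exists and is finite, the finiteness being a consequence of the very volume bound proved at the end.

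Two things then need checking: that the enlarged balls cover, and that the small balls are trapped in a controlled set. For the covering I argue by maximality: if some $h\in E_1(t,r,K,x)$ were omitted, its $(t,r/2)$-ball would meet some $g_{-t}B(r/2)g_t\,h_i$, say $g_{-t}\alpha g_t\,h=g_{-t}\beta g_t\,h_i$ with $\alpha,\beta\in B(r/2)$. Since $s\mapsto g_{-t}sg_t$ is an automorphism of $H$, this rearranges to $h=g_{-t}(\alpha^{-1}\beta)g_t\,h_i$, and right-invariance of $d$ together with $d(\alpha^{-1},e)=d(\alpha,e)$ and the triangle inequality gives $\alpha^{-1}\beta\in B(r)$; hence $h$ lies in the Bowen $(t,r)$-ball $g_{-t}B(r)g_t\,h_i$. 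Thus the $N$ Bowen $(t,r)$-balls centred at the $h_i$ cover $E_1(t,r,K,x)$.

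The containment step is where the enlargements $2r$ and $\partial_r K$ enter, and it is the part demanding the most care. Take any point $h=g_{-t}h'g_t\,h_i$ of the $(t,r/2)$-ball with $h'\in B(r/2)$. For the radius I estimate $d(h,e)\le d(g_{-t}h'g_t,e)+d(h_i,e)$ by right-invariance; since $H$ is the unstable subgroup, $g_{-t}(\cdot)g_t$ is non-expanding on $(H,d)$ for $t\ge 0$, so $d(g_{-t}h'g_t,e)\le d(h',e)<r/2$, giving $d(h,e)<3r/2<2r$ and hence $h\in B(2r)$. For the target I use that the quotient projection $G\to X$ is distance non-increasing: writing $\delta=g_t h h_i^{-1}g_{-t}$ one has $g_t h x=\delta\,(g_t h_i x)$, whence $d(g_t h x,g_t h_i x)\le d(\delta,e)$, and a direct cancellation shows $\delta=h'$, so this distance is $<r/2$. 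Because $g_t h_i x\in K$, it follows that $d(g_t h x,K)<r$, i.e. $g_t h x\in\partial_r K$. Therefore each small ball satisfies $g_{-t}B(r/2)g_t\,h_i\subseteq E_1(t,2r,\partial_r K,x)$.

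Finally I count volumes. The balls $g_{-t}B(r/2)g_t\,h_i$ are pairwise disjoint, are contained in $E_1(t,2r,\partial_r K,x)$, and each has $\nu$-measure $\nu(g_{-t}B(r/2)g_t)$ by translation-invariance of the Haar measure $\nu$ (the unstable subgroup $H$ being unimodular). Summing yields $N\,\nu(g_{-t}B(r/2)g_t)\le\nu(E_1(t,2r,\partial_r K,x))$, so $N\le\nu(E_1(t,2r,\partial_r K,x))/\nu(g_{-t}B(r/2)g_t)$, exactly the asserted number of Bowen $(t,r)$-balls. The only genuinely delicate point is the non-expansion of $g_{-t}(\cdot)g_t$ on $H$ with respect to the fixed right-invariant metric; I would either invoke it as a standard feature of the unstable horospherical subgroup or absorb any bounded distortion into the slack between $3r/2$ and $2r$.
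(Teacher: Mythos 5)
Your proof is correct and follows essentially the same packing-and-covering argument as the paper: the paper takes a maximal $(t,r)$-separated subset of $E_1(t,r,K,x)$ and derives disjointness of the small Bowen $(t,r/2)$-balls from separation, which is just a cosmetic reformulation of your maximal disjoint family, and the covering, containment in $E_1(t,2r,\partial_r K,x)$, and volume-count steps coincide. Note that the non-expansion of $h\mapsto g_{-t}hg_t$ on $H$ that you flag as delicate is used implicitly by the paper as well, in the asserted inclusion $g_{-t}B(r)g_t\,B(r)\subset B(2r)$.
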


\begin{proof}
Let us pick $h \in E_1(t,r, K, x)$ so that $g_t h x \in K$. Any $h' \in   g_{-t}B(r) g_t h$ satisfies
$$
d(g_t h' x, K) \le d(g_t h' x,g_t h x) \le d(g_t h',g_t h) <r.
$$
Hence, $g_t h' x  \in \partial_r K$. Moreover, $g_{-t}B(r) g_t B(r) \subset B(2r)$ so that $ g_{-t}B(r) g_t h  \subset E_1(t,2r,\partial_rK,x)$ whenever $h \in E_1(t,r,K,x)$. A finite subset $S$ of $E_1(t,r,K,x)$ is said to be $(t,r)$-\emph{separated} if for any distinct $h,g \in S$ we have $d(g_t h,g_t g) \ge r$. Let $S$ be a $(t,r)$-separated set of $E_1(t,r,K,x)$ with maximum cardinality. Then, it suffices to estimate $\# S$ from above as maximality implies that $E_1(t,r,K,x) \subset \cup_{h \in S}g_{-t}B(r) g_th$. On the other hand, the sets $g_{-t}B(r/2) g_t h$ and $g_{-t}B(r/2) g_tg$ are disjoint for distinct $g, h \in S$ since otherwise if $g_{-t}h' g_t h=g_{-t}g' g_tg$ with $h',g' \in B(r/2)$ then we would have
\begin{multline*}
d(g_t g,g_t h) =d(g_t g, (h')^{-1}g' g_t g)=d(e, (h')^{-1}g')\\
\le d(e,g')+d(g',(h')^{-1}g') <r.
\end{multline*}
Also, any $h \in S$ satisfies $g_{-t}B(r/2) g_t h \subset E_1(t,2r,\partial_rK,x)$ so that 
$$\bigsqcup_{h \in S}g_{-t}B(r/2) g_th \subset  E_1(t,2r,\partial_rK,x).$$
 Thus,
$$\# S \le \frac{\nu(E_1(t,2r,\partial_r K, x))}{\nu(g_{-t}B(r/2) g_t)}. \qedhere$$
\end{proof}

\begin{remark}
We note that in the conclusion of Lemma~\ref{lem:ratio} and Proposition~\ref{prop:exceptionalk} one may replace $2r$ by $r+\frac12{ \rm diam}( g_{-t}B(r/2) g_t )$.
\end{remark}

\begin{proof}[Proof of Proposition~\ref{prop:exceptionalk}]
The case $k=1$ is considered in Lemma~\ref{lem:ratio}. Assume that $k \ge 2$. We note that $E_{k}(t,r,K,x)\subset E_{k-1}(t,r,K,x)$ so that any covering of $E_{k-1}(t,r,K,x)$ also provides a covering for $E_{k}(t,r,K,x).$  Assume that the set $E_{k-1}(t,r,K,x)$ can be covered with 
\begin{equation}\label{eqn:inductive}
\sup_{x' \in \partial_rK}\left(\frac{\nu(E_1(t,2r,\partial_r K, x'))}{\nu(g_{-t}B(r/2) g_t)}\right)^{k-1}
\end{equation}
 Bowen $(t(k-1),r)$-balls in $H$ and let $g_{-t(k-1)}B(r) g_{t(k-1)} h$ be one of these Bowen balls. We are interested in the covering of the set  
$$D:=g_{-t(k-1)}B(r) g_{t(k-1)} h \cap E_k(t,r,K,x).$$
If the set $D$ is empty there is nothing to cover. Let us assume that $D$ is nonempty. We claim that $D':=g_{t(k-1)}D h^{-1} g_{-t(k-1)} \subset E_1(t,r,K,x'')$ where $x''=g_{t(k-1)}hx$. First note that $D' \subset B(r).$ Next, for any $h' \in D'$ we note that $g_{-t(k-1)} h' g_{t(k-1)} h \in D$ so that 
$$g_{tk} g_{-t(k-1)} h' g_{t(k-1)} hx=g_t h' x'' \in K.$$
This proves the claim. Now, we apply Lemma~\ref{lem:ratio} to see that the set $D'$ can be covered with at most
\begin{equation}\label{eqn:x''}
\frac{\nu(E_1(t,2r,\partial_r K, x''))}{\nu(g_{-t}B(r/2) g_t)}
\end{equation}
Bowen $(t,r)$-balls. Fix any $h'' =g_{-t(k-1)} g g_{t(k-1)} h \in D$, then 
$$d(x'', K) \le d(g_{t(k-1)}hx, g_{t(k-1)}h''x ) \le d(g_{t(k-1)}h, g g_{t(k-1)} h ) \le d(e,g) <r,$$
where $e \in G$ is the identity element. So, $x'' \in \partial_r K.$ Together with  \eqref{eqn:x''}
 we see that the original set $D$ can be covered with
\begin{equation}
\frac{\nu(E_1(t,2r,\partial_r K, x''))}{\nu(g_{-t}B(r/2) g_t)} \le \sup_{x' \in \partial_r K} \frac{\nu(E_1(t,2r,\partial_r K, x'))}{\nu(g_{-t}B(r/2) g_t)}
\end{equation}
Bowen $(tk,r)$-balls. Finally, combining with \eqref{eqn:inductive} we obtain that the set $E_k(t,r,K,x)$ can be covered with 
\begin{equation*}
\sup_{x' \in \partial_rK}\left(\frac{\nu(E_1(t,2r,\partial_r K, x'))}{\nu(g_{-t}B(r/2) g_t)}\right)^{k}
\end{equation*}
 Bowen $(tk,r)$-balls in $H$.
\end{proof}

\section{Proof of Theorem~\ref{thm:main'}}\label{sec:EM}

In this section we use the exponential mixing estimates, that is, the property $(EM)$ to deduce Theorem~\ref{thm:main'}.  For any $r>0$ and $x_0 \in X$ we define the compact set 
$$K(x_0,r)=\{x \in X : d(x,x_0) \ge r\}.$$
We recall that $\nu$ is the Lebesgue measure on $H$ and $B(r)=B^H(e,r)$.
Using the countable stability of the Hausdorff dimension it is easy to see that Theorem~\ref{thm:main'} follows from the following.

\begin{thm}\label{thm:main} Assume that the flow $(X,\{g_t\})$ has property $(EM)$. Then, there exists a constant $C>0$ such that for any $x,x_0 \in X$ and $r>0$ sufficiently small so that $g \to g x_0$ is injective on $B^G(r)$, the set
$$E(r/2,K(x_0,r), x):=\{h \in B(r/2): g_t h x \in K(x_0,r), \forall t \ge 0\}$$
has Hausdorff dimension at most $\dim H + C \frac{ \mu(B^X(r))}{\log \mu(B^X(r))}.$
\end{thm}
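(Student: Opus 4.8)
The plan is to reduce the continuous-time avoidance condition to finitely many discrete times, use property (EM) to obtain a one-step ``survival fraction,'' promote this to genuine exponential decay of the measure of the surviving sets, and only then convert that measure decay into a dimension bound by covering at the finest Bowen scale. First I would discretize: fix a time step $t>0$ (to be chosen) and set $A_k:=E_k(t,r/2,K(x_0,r),x)$. Since the defining condition of $E(r/2,K(x_0,r),x)$ forces $g_{t\ell}hx\in K(x_0,r)$ for every $\ell$, we have $E(r/2,K(x_0,r),x)\subset A_k$ for all $k$, so it suffices to cover the $A_k$. I would take $t$ to be the smallest value for which (EM) is effective. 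Here the test functions are smoothings at scale $\sim r$ of $\mathbf 1_{B(r/2)}$ on $H$ and of $\mathbf 1_{B^X(x_0,r/2)}$ on $X$; their Sobolev norms and gradients grow polynomially in $1/r$, so the error $\mathrm{const}(f,\psi)e^{-\lambda t}$ is beaten by the main term $\nu(B(r/2))\mu(B^X(r/2))$ once $t\sim\tfrac1\lambda\log(1/r)$, and this choice forces $t\sim|\log\mu(B^X(r))|$.

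Next, the one-step estimate. Applying (EM) to these $f,\psi$ yields, for every $y\in X$ and the chosen $t$,
$$\nu\{h\in B(r/2): g_t hy\in B^X(x_0,r/2)\}\ge\tfrac12\,\nu(B(r/2))\,\mu(B^X(r/2)),$$
so, writing $\theta:=1-\tfrac12\mu(B^X(r/2))$, the survival fraction satisfies $\nu(E_1(t,r/2,K(x_0,r/2),y))\le\theta\,\nu(B(r/2))$. Here I would use that $\partial_{r/2}K(x_0,r)=K(x_0,r/2)$, which holds since $X$ is a length space, to match the thickening that appears in Lemma~\ref{lem:ratio}.

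The crux is to upgrade this to the clean exponential decay
$$\nu(A_k)\le C_0\,\theta^k\,\nu(B(r/2)),\qquad C_0\text{ independent of }k.$$
The naive route through Proposition~\ref{prop:exceptionalk} is \emph{not} sufficient for a dimension gap: its covering count is $\big(\sup_{x'}\nu(E_1)/\nu(g_{-t}B(r/4)g_t)\big)^k=(4^d e^{\alpha t}\theta)^k$, where $d=\dim H$ and $e^{-\alpha t}\nu(B(\rho))=\nu(g_{-t}B(\rho)g_t)$, and the multiplicative constant $4^{dk}$ swamps the gain $\theta^k$, pushing any resulting dimension estimate back above $\dim H$. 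To avoid accumulating a constant at each of the $k$ steps I would instead iterate the one-step bound along an \emph{exact} tessellation of $H$ into Bowen $(t\ell,\cdot)$-tiles: conjugating each level-$(\ell-1)$ tile by $g_{t(\ell-1)}$ to a standard ball turns the next survival condition into a single application of the one-step (EM) estimate, and because the tiles \emph{partition} rather than merely cover, summing gives the recursion $\nu(A_\ell)\le\theta\,\nu(A_{\ell-1})$ up to one overall boundary constant $C_0$. This packing-instead-of-covering argument is the main obstacle, since the boundary tiles straddling $\partial B^X(x_0,r)$ must be absorbed into $C_0$ uniformly in $\ell$.

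Finally I would convert the measure decay into dimension. Each point of $A_k$ lies in a Bowen $(tk,r/2)$-ball, an ellipsoid whose smallest semi-axis is $\epsilon_k:=e^{-\lambda_1 tk}(r/2)$, with $\lambda_1$ the largest expansion rate of $H$ under $g_t$; covering at this finest scale is what avoids an $O(1)$ penalty. The measure bound then lets me cover $A_k$ by $\lesssim\nu(A_k)/\nu(B(\epsilon_k))\lesssim\theta^k e^{d\lambda_1 tk}$ balls of radius $\epsilon_k$, so that
$$\sum(\mathrm{diam})^s\lesssim r^s\big(\theta\,e^{(d-s)\lambda_1 t}\big)^k,$$
which tends to $0$ as $k\to\infty$ exactly when $s>\dim H-\tfrac{\log(1/\theta)}{\lambda_1 t}$. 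Since $\log(1/\theta)\ge\tfrac12\mu(B^X(r/2))\gtrsim\mu(B^X(r))$ and $t\sim|\log\mu(B^X(r))|$, this gives
$$\dim_H E(r/2,K(x_0,r),x)\le\dim H-C'\frac{\mu(B^X(r))}{|\log\mu(B^X(r))|}=\dim H+C'\frac{\mu(B^X(r))}{\log\mu(B^X(r))},$$
as claimed. The extra logarithmic factor (absent in the Conjecture) is precisely the price of the mixing time $t\sim\log(1/r)$, and the constant $C'$ absorbs $1/\lambda_1$ together with the implied constants of (EM).
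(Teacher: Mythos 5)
Your proposal shares the paper's analytic skeleton exactly: discretize to $E_k(t,r/2,K(x_0,r),x)$; apply (EM) to smoothed indicators of a ball in $H$ and of $B^X(x_0,r/2)$ to get the one-step measure bound (the paper's Proposition~\ref{prop:measureEst}); take $t\asymp\log(1/r)$; box-count at the smallest Bowen scale $re^{-\lambda_0 tk}$; and the closing arithmetic $n-D''r^m/\log(1/r)$ is identical, since $\mu(B^X(r))\asymp r^m$. Where you part ways is the counting step, and your criticism of the ``naive route'' in fact applies to the paper's own proof, which is precisely that route: it combines Proposition~\ref{prop:exceptionalk} with a per-Bowen-ball covering, and the displayed identification $O\bigl(\nu(g_{-tk}B(r/2)g_{tk})/\nu(B(re^{-\lambda_0 tk}))\bigr)=O\bigl(\nu(g_{-t}B(r/4)g_t)^k/\nu(B(re^{-\lambda_0 t}))^k\bigr)$ does not hold. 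Indeed, conjugation by $g_{-s}$ scales $\nu$ by a constant Jacobian $e^{-\alpha s}$ (where $\alpha>0$ is the sum of the eigenvalues of $\ad_{\log g_1}$ on the Lie algebra of $H$), and $\nu(B(\rho))\asymp\rho^n$, so the left-hand side is $\asymp e^{(n\lambda_0-\alpha)tk}$ while the right-hand side is $4^{-nk}e^{(n\lambda_0-\alpha)tk}$: the two differ by the factor $4^{nk}$ you predicted. Restoring it, the method yields a covering count $\asymp(4^n\theta)^k e^{n\lambda_0 tk}$ with $\theta=1-O(r^m)$, hence $\underline\dim_B\le n+(n\log 4+\log\theta)/(\lambda_0 t)>n$ for every choice of $t$, because the defect is a per-step multiplicative constant coming from overlapping Bowen-ball coverings (even the Remark after Lemma~\ref{lem:ratio} only reduces $4^n$ to roughly $2^n$). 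So your tessellation detour is not a stylistic variant: an overlap-free count giving $\nu(A_k)\lesssim\theta^k\nu(B(r/2))$, and hence $N_k\lesssim\theta^ke^{n\lambda_0 tk}$, is exactly what justifies the covering bound the paper asserts but does not actually derive; this is the Kleinbock--Margulis tessellation method, cited in \S\ref{sec:auxiliary} but not used there.

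That said, your write-up is a sketch precisely at its crux, and three points need real work. First, the recursion $\nu(A_\ell)\le\theta\,\nu(A_{\ell-1})$ up to one overall constant presupposes exact tessellations of $H$ compatible with conjugation by $g_t$; for a general unstable horospherical $H$ (simply connected nilpotent, possibly nonabelian) this requires the construction of \cite{KleMar} -- \cite{BK13} only needs cubes in abelian $H$ -- and you must show the boundary tiles contribute a factor $1+O(e^{-ct})$ absorbable uniformly in $\ell$, which is where the labor lies. Second, ``cover $A_k$ by $\lesssim\nu(A_k)/\nu(B(\epsilon_k))$ balls'' is not valid for an arbitrary set: the covering number at scale $\epsilon_k$ is controlled by $\nu(\partial_{\epsilon_k}A_k)/\nu(B(\epsilon_k/2))$, i.e.\ by the thickened set. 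This is repairable, since the $\epsilon_k$-thickening of $A_k$ is contained in a survivor set with radii relaxed by a bounded factor, to which the same recursion applies (or one counts tiles meeting $A_k$ rather than measure, as in \cite{BK13}), but it must be said. Third, smoothing ``at scale $\sim r$'' is too coarse: the indicator-approximation error is $\asymp\epsilon r^{n-1}$, which must be $\ll r^{m+n}$, so the mollification scale must be polynomially smaller than $r$ (the paper takes $\epsilon=e^{-\lambda' t}$ with $t\ge\frac{m+n+p}{\lambda'}\log\frac1r$); this changes constants but not your conclusion $t\asymp|\log\mu(B^X(r))|$. With these repairs your route yields the theorem, and it is the correct completion of an argument whose printed form has a genuine gap at the step you identified.
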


We now give the proof of Theorem~\ref{thm:main'} assuming Theorem~\ref{thm:main}.

\begin{proof}[Proof of Theorem~\ref{thm:main'}]
Let $x_0\in X$ and $r>0$ be given. From countable stability of the Hausdorff dimension it suffices to show that for any $x \in X$ there exists some $r'>0$ such that the set
$$ \{g \in B^G(r')  : g_t g x \in K(x_0,r) \text{ for any } t\ge 0\}$$
has Hausdorff dimension at most $\dim X + C \frac{ \mu(B^X(r))}{\log\mu(B^X(r))}.$ Let $H^0=\{g \in G : g g_1=g_1g\}$ and $H^-=\{g \in G : g_n g g_{-n} \to e \text{ as } n \to \infty\}$. Then, locally $G$ is a direct product of $H^-,H^0$ and $H$. Once $r'>0$ is sufficiently small, for any $g \in B^G(r')$ we may write $g=h'h$ where $h \in B(r')$ and $h' \in B^{H^-H^0}(r')$. Note that for any $y \in X$ we have 
$$d(g_t g x, y) \le d(g_t h'hx,g_t h x)+d(g_t h x, y) \le d(g_t h x,y)+r'.$$ 
Thus, $g_t gx \in K(x_0,r)$ implies $g_t h x \in \partial_{r'}K(x_0,r) \subset K(x_0,r-r')$ for any $r' \in (0,r)$. So, we deduce that the Hausdorff dimension of the set \eqref{eqn:exceptionalset} is at most
$$ \dim_H \left(\{h \in B^H(r): g_t h x \in K(x_0,r-r'), \forall t\ge 0\} \times H^-  H^0\right)
$$
for any $r'\in (0,r)$. By letting, $r' \to 0$ and using $\dim X=\dim G=\dim H+ \dim (H^-H^0)$ we see that the Hausdorff dimension of the set \eqref{eqn:exceptionalset} is at most
$$\dim X + C \frac{ \mu(B^X(r))}{\log\mu(B^X(r))},$$
as desired.
\end{proof}

For the remaining of the article we focus our attention to the proof of Theorem~\ref{thm:main}. To estimate the Hausdorff dimension we need to find covering of the set with small balls. Clearly, $E(r/2,K(x_0,r),x) \subset E_k(t,r/2,K(x_0,r),x)$ for any $k \in \N$ and $t \ge 0$, where the latter were defined in the previous section. To apply the results obtained in the previous section we need to obtain a measure estimate of 
$$E_1(t,r,\partial_{r/2} K(x_0,r),x)=E_1(t,r, K(x_0,r/2),x).$$
 To optimize the estimates, we will later specify $t \ge 0$.

Let $n=\dim H$ and $m=\dim G$. Recall that $r_0>0$ is an injectivity radius of $X$. From property $(EM)$ we will deduce the following.

\begin{prop}\label{prop:measureEst}
 Let $(X,\{g_t\})$ be the flow with property $(EM)$.
There exist constants $D,E,\lambda'>0$ such that for any $x, x_0 \in X$, $r\in (0,r_0/2)$, and $t\ge \frac{1}{\lambda'}\log \frac1r$ we have
$$\nu(E_1(t,r, K(x_0,r/2),x)) \le \nu(B(r)) -  D r^{m+n}+ E e^{-\lambda' t}.$$
\end{prop}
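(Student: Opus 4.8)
The plan is to pass to the complement of $A_t := \{h \in B(r): g_t h x \in B^X(x_0,r/2)\}$ inside $B(r)$ and extract the required lower bound from property $(EM)$. Since $K(x_0,r/2)$ is exactly the complement of the open ball $B^X(x_0,r/2)$, one has $\nu(E_1(t,r,K(x_0,r/2),x)) = \nu(B(r)) - \nu(A_t)$, so it suffices to prove $\nu(A_t) \ge D\,r^{m+n} - E\,e^{-\lambda' t}$.

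To see this I would fix once and for all a smooth profile $\chi\colon[0,\infty)\to[0,1]$ with $\chi\equiv 1$ on $[0,1/4]$ and $\chi\equiv 0$ on $[1/2,\infty)$, and define the rescaled bumps $\psi(y):=\chi(d(y,x_0)/r)$ on $X$ and $f(h):=\chi(d(h,e)/r)$ on $H$. For $r<r_0/2$ these balls lie in a single normal-coordinate chart, so $f,\psi$ are genuinely smooth and satisfy $\mathbf 1_{B(r/4)}\le f\le\mathbf 1_{B(r/2)}$ and $\mathbf 1_{B^X(x_0,r/4)}\le\psi\le\mathbf 1_{B^X(x_0,r/2)}$. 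In particular $\supp f\subset B(r)\subset B^G(r)\subset B^G(r_0)$, on which $g\mapsto g x$ is injective, so $(EM)$ applies. Because $f\le\mathbf 1_{B(r)}$ and $\psi\le\mathbf 1_{B^X(x_0,r/2)}$, the product $f(h)\,\psi(g_t h x)$ is dominated by $\mathbf 1_{A_t}(h)$, whence $\nu(A_t)\ge\int_H f(h)\psi(g_t h x)\,d\nu(h)$; applying the lower half of the $(EM)$ inequality gives
$$\nu(A_t)\ \ge\ \int_H f\,d\nu\int_X\psi\,d\mu\ -\ \mathrm{const}(f,\psi)\,e^{-\lambda t}.$$

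For the main term, the lower sandwich bounds yield $\int_H f\,d\nu\ge\nu(B(r/4))$ and $\int_X\psi\,d\mu\ge\mu(B^X(x_0,r/4))$, and by compactness of $X$ together with comparability of Riemannian volume with Lebesgue measure on small balls, both are bounded below by uniform constant multiples of $r^n$ and $r^m$; this produces the term $-D\,r^{m+n}$ with $D>0$ independent of $r,x,x_0$. For the error, since $f,\psi$ are rescalings of a fixed profile on balls of radius $\asymp r$, each derivative costs a factor $r^{-1}$ while the integration runs over a set of measure $\asymp r^n$ (resp. $r^m$), giving $\|f\|_\ell\lesssim r^{n/2-\ell}$, $\|\psi\|_\ell\lesssim r^{m/2-\ell}$, $\max_X\|\nabla\psi\|\lesssim r^{-1}$ and $\int_H|f|\,d\nu\lesssim r^n$; substituting into the explicit formula for $\mathrm{const}(f,\psi)$ bounds it by $C_0\,r^{-P}$, where $P:=\max\!\big(0,\,2\ell-\tfrac{m+n}{2}-(n-1)k\big)$ and $C_0$ depends only on the fixed data $m,n,\ell,k,\chi$ (recall $r<1$, so we may pass to the positive part). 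Finally I would set $\lambda':=\lambda/(P+1)$; the hypothesis $t\ge\frac1{\lambda'}\log\frac1r$ reads $r\ge e^{-\lambda' t}$, so $r^{-P}\le e^{P\lambda' t}$ and
$$\mathrm{const}(f,\psi)\,e^{-\lambda t}\ \le\ C_0\,e^{(P\lambda'-\lambda)t}\ =\ C_0\,e^{-\lambda' t},$$
giving the claim with $E:=C_0$.

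The step I expect to be the main obstacle is the uniform Sobolev and volume control: one must verify that the rescaled bumps have the stated $r$-power norms and that the ball measures admit the stated lower bounds with constants that do not degenerate as the base points $x,x_0$ range over $X$ (and as the relevant ball in $H$ sits near $e$). This rests on the uniform comparability, over the compact manifold $X$ and near $e\in H$, of the Riemannian distance and volume with their Euclidean models in normal coordinates; once this is in place, the exponent bookkeeping and the choice of $\lambda'$ are routine.
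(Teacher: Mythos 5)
Your proof is correct, and it manages the errors by a genuinely different route than the paper, even though the skeleton is the same: both arguments start from the decomposition $E_1(t,r,K(x_0,r/2),x)\sqcup A_t=B(r)$ and both feed smooth bumps into property (EM). The paper approximates the sharp indicators $1_{B(r)}$ and $1_{B^X(x_0,r/2)}$ \emph{from the outside} at the $t$-dependent scale $\epsilon=e^{-\lambda' t}$ (via the convolution construction of Lemma~\ref{lem:feps}); this keeps the main term equal to the full product $\int f_t\,d\nu\int\psi_t\,d\mu\ge\nu(B(r))\,\mu(B^X(x_0,r/2))$, at the cost of two comparison terms $\int|f_t-f|\,d\nu$ and the analogue for $\psi$, bounded by shell volumes $O(e^{-\lambda' t})$, and of Sobolev norms growing like $e^{\lambda' t(2\ell+m+n+k+km)}$, absorbed by choosing $\lambda'$ with $\lambda-(2\ell+m+n+k+km)\lambda'>\lambda'$. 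You instead approximate \emph{from the inside} at the fixed scale $\asymp r$: the one-sided sandwiches $1_{B(r/4)}\le f\le 1_{B(r/2)}$ and $1_{B^X(x_0,r/4)}\le\psi\le 1_{B^X(x_0,r/2)}$ give $f(h)\psi(g_thx)\le 1_{A_t}(h)$ pointwise, so no indicator-comparison errors arise at all; the main term shrinks only by a constant factor (harmless, since the proposition asserts only \emph{some} $D>0$), and the entire burden moves into the $r$-powers of ${\rm const}(f,\psi)\lesssim r^{-P}$, which you convert into $e^{P\lambda' t}$ via the standing hypothesis $t\ge\frac{1}{\lambda'}\log\frac1r$ and kill by setting $\lambda'=\lambda/(P+1)$. (In the paper that hypothesis plays a different role: it guarantees $\supp\psi_t\subset B^G(2r)$ so that $\psi_t$ descends to $X$; your $\psi$ lives on $X$ from the start.) Your exponent bookkeeping $\|f\|_\ell\lesssim r^{n/2-\ell}$, $\|\psi\|_\ell\lesssim r^{m/2-\ell}$, $\max\|\nabla\psi\|\lesssim r^{-1}$, $\int|f|\,d\nu\lesssim r^n$ is correct, as is the passage to the positive part $P=\max\bigl(0,\,2\ell-\tfrac{m+n}{2}-(n-1)k\bigr)$ using $r<r_0<1$. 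What your version buys is a cleaner error structure (one error term instead of three) and smoothing data independent of $t$; what the paper's version buys is independence from any uniform control of $r$-rescaled bump norms, since all blow-up is in $\epsilon(t)$ and the construction of Lemma~\ref{lem:feps} is used verbatim.

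The one soft spot is exactly the one you flagged: $\chi(d(\cdot,x_0)/r)$ and $\chi(d(\cdot,e)/r)$ are smooth only if $r_0/2$ is below the relevant Riemannian injectivity radii (the non-smoothness at the center is harmless because $\chi\equiv 1$ there, but the paper's $r_0$ is defined by injectivity of $g\mapsto gx$, not by smoothness of the distance function). This is cosmetic rather than a gap: either shrink the threshold radius, or simply build $f$ and $\psi$ by the paper's own Lemma~\ref{lem:feps} with $\epsilon=r/4$ (applied with radius $r/4$), which reproduces your sandwiches with norm bounds $\lesssim r^{-(n+\ell)}$, $\lesssim r^{-(m+\ell)}$; these are cruder powers of $r$, but your absorption step is insensitive to the value of $P$, so the rest of your argument goes through unchanged. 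Uniformity of the volume lower bounds $\nu(B(r/4))\gtrsim r^n$ and $\mu(B^X(x_0,r/4))\gtrsim r^m$ over $x_0\in X$ follows from compactness of $X$, as you say.
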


To make use of the (EM) property we need smooth functions that approximates the characteristic functions of small balls.

\begin{lem}\label{lem:feps}
For any unimodular Lie group $G'$ of dimension $d$ and $\ell \in \N$ there exist constants $M_{\ell,d},M_{1,d}>0$ such that the following holds. For any $\epsilon, r>0$, there exist functions $f_\epsilon:G' \to [0,1]$ such that 
\begin{itemize} 
\item $f_\epsilon \equiv 1$ on $B^{G'}(r)$
\item  $f_\epsilon\equiv 0$ on $\left(B^{G'}(r+\epsilon)\right)^c$
\item $\|f_\epsilon\|_\ell \le M_{\ell,d} \epsilon^{-(d+\ell)}$
\item $\|\nabla f_\epsilon\|  \le M_{1,d} \epsilon^{-1-d}.$
 \end{itemize}
\end{lem}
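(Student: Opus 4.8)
The plan is to obtain $f_\epsilon$ by mollifying the indicator function of the intermediate ball $B^{G'}(r+\epsilon/2)$ with a smooth bump supported in a group ball of radius $\epsilon/2$, and to read off all four properties from the right-invariance of $d$ together with the unimodularity of $G'$.

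First I would construct the mollifier. Fix once and for all a smooth $\Theta\colon \R^d\to[0,\infty)$ supported in the Euclidean unit ball with $\int\Theta=1$, and transport it to $G'$ through the exponential chart $\exp\colon \mf g'\to G'$ at the identity: for $\delta>0$ small set
$$\psi_\delta(g):= c_\delta\,\delta^{-d}\,\Theta\!\left(\delta^{-1}\exp^{-1}(g)\right),$$
with $c_\delta$ chosen so that $\int_{G'}\psi_\delta\,d\nu=1$, where $\nu$ is the Haar measure. Since in exponential coordinates the right-invariant metric and the Haar density are comparable to their Euclidean counterparts on a fixed small ball (with constants depending only on $G'$), for $\delta$ small one has $\supp\psi_\delta\subset B^{G'}(\delta)$, $c_\delta=O(1)$, and, writing $X^\alpha$ for a word of length $|\alpha|$ in a fixed frame of right-invariant vector fields, $\|X^\alpha\psi_\delta\|\le C_{\alpha}\,\delta^{-d-|\alpha|}$, whence $\|X^\alpha\psi_\delta\|_{L^1(\nu)}\le C_\alpha'\,\delta^{-|\alpha|}$ because $\nu(\supp\psi_\delta)\lesssim\delta^d$.

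Next I would set $\delta=\epsilon/2$ and define $f_\epsilon:=\mathbf 1_{B^{G'}(r+\epsilon/2)}*\psi_{\epsilon/2}$, that is,
$$f_\epsilon(g)=\int_{G'}\mathbf 1_{B^{G'}(r+\epsilon/2)}(h)\,\psi_{\epsilon/2}(g h^{-1})\,d\nu(h).$$
The two shape conditions are then exact. Right-invariance of $d$ gives $d(g,h)=d(gh^{-1},e)$, so $\psi_{\epsilon/2}(gh^{-1})\ne0$ forces $d(g,h)<\epsilon/2$. If $d(e,g)\le r$, then every such $h$ lies in $B^{G'}(r+\epsilon/2)$ by the triangle inequality, and unimodularity (left-invariance of $\nu$ combined with its invariance under inversion) yields $\int\psi_{\epsilon/2}(gh^{-1})\,d\nu(h)=\int\psi_{\epsilon/2}\,d\nu=1$, so $f_\epsilon\equiv1$ on $B^{G'}(r)$; if $d(e,g)\ge r+\epsilon$, then no such $h$ lies in $B^{G'}(r+\epsilon/2)$, so $f_\epsilon\equiv0$ off $B^{G'}(r+\epsilon)$. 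In particular $\supp f_\epsilon\subset B^{G'}(r+\epsilon)$ and $0\le f_\epsilon\le1$. For the size estimates one differentiates under the integral, letting the right-invariant derivative fall on the smooth factor, $X^\alpha f_\epsilon(g)=\int_{B^{G'}(r+\epsilon/2)}(X^\alpha\psi_{\epsilon/2})(gh^{-1})\,d\nu(h)$, so after the same unimodular substitution $\|X^\alpha f_\epsilon\|\le\|X^\alpha\psi_{\epsilon/2}\|_{L^1(\nu)}\lesssim\epsilon^{-|\alpha|}$. For $|\alpha|=1$ this gives $\|\nabla f_\epsilon\|\lesssim\epsilon^{-1}\le M_{1,d}\,\epsilon^{-1-d}$ for $\epsilon<1$; expressing the Sobolev norm through the frame $\{X^\alpha\}$ and using $\supp f_\epsilon\subset B^{G'}(r+\epsilon)$ gives $\|f_\epsilon\|_\ell^2=\sum_{|\alpha|\le\ell}\|X^\alpha f_\epsilon\|_{L^2}^2\lesssim\epsilon^{-2\ell}\,\nu\big(B^{G'}(r+\epsilon)\big)\lesssim\epsilon^{-2\ell}$, hence $\|f_\epsilon\|_\ell\lesssim\epsilon^{-\ell}\le M_{\ell,d}\,\epsilon^{-(d+\ell)}$ for $\epsilon<1$.

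I expect the main obstacle to be pinning down the shape conditions rather than the size estimates, which hold with considerable room to spare (the surplus factor $\epsilon^{-d}$ absorbs the constant metric distortion of the chart). The balls $B^{G'}(r)$ are genuine metric balls and the exponential-chart metric is only comparable to the Euclidean one, so composing a Euclidean bump directly with $\exp^{-1}$ would corrupt the radii by a multiplicative constant and fail when $\epsilon\ll r$. Routing the argument through convolution with a group mollifier sidesteps this, because the value-one and support-zero conclusions then rest only on the exact triangle inequality for $d$ and on unimodularity, and never on comparing metric balls with coordinate balls.
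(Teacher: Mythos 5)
Your proposal is correct and follows essentially the same route as the paper: both convolve the indicator of the intermediate ball $B^{G'}(r+\epsilon/2)$ with an $\epsilon$-scale approximate identity, obtain the two shape conditions from the triangle inequality and unimodularity, and get the derivative bounds by differentiating under the integral. The only cosmetic differences are that you spell out the mollifier via the exponential chart (the paper writes $g_\epsilon(x)=(c(d)/\epsilon^d)g(2x/\epsilon)$ and leaves the chart implicit) and you bound the Sobolev norm by sup-norms of derivatives times the square root of the support volume, where the paper invokes Young's inequality $\|D_\alpha f_\epsilon\|_{L^2}\le \|D_\alpha g_\epsilon\|_{L^2}\,\|1_{B^{G'}(r+\epsilon/2)}\|_{L^1}$ -- both yield the stated (indeed stronger) bounds for small $\epsilon$.
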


\begin{proof}
The proof is essentially contained in \cite{BK13} and for completeness we will recall the proof here. Let $g:B^{G'}(1) \to [0,\infty]$ be a smooth function with $\|g\|_{L^1}=1$ and define $g_\epsilon(x)=(c(d)/\epsilon^d)g(2x/\epsilon)$ where $c(d)>0$ is chosen so that $\|g_\epsilon\|_{L^1}=1.$ For any $\alpha=(\alpha_1,\alpha_2,\dots,\alpha_d)$ let $D_\alpha$ denote the differential operator $\frac{\partial^{|\alpha|}}{\partial x_1^{\alpha_1}\dots \partial x_d^{\alpha_d}}$. The convolution $f_\epsilon=g_\epsilon * 1_{B^{G'}(r+\epsilon/2)}$ is smooth as $D_\alpha f_\epsilon=(D_\alpha g_\epsilon) * 1_{B^{G'}(r+\epsilon/2)}$ and it has the support contained in 
$$\supp(g_\epsilon) \supp(1_{B^{G'}(r+\epsilon/2)}) \subset B^{G'}(r+\epsilon).$$
Clearly, $0 \le f_\epsilon $ as $0 \le g_\epsilon$. From $\int g_\epsilon =1$ it follows that $f_\epsilon \le 1$ and further with unimodularity assumption we see that $f \equiv 1$ on $B^{G'}(r)$. Thus, it remains to prove the last two assertions. As $g_\epsilon$ is smooth and compactly supported we see that $|D_\alpha f_\epsilon(x)| \le M_{|\alpha|,d} \epsilon^{-d-|\alpha|}$. This clearly implies $\|\nabla f_\epsilon\|  \le M_{1,d} \epsilon^{-1-d}$ and Young's Inequality yields
$$\|D_\alpha f_\epsilon\|_{L^2} \le \|D_\alpha g_\epsilon\|_{L^2} \|1_{B^{G'}(r+\epsilon/2)}\|_{L^1}  \le M_{|\alpha|,d} \epsilon^{-d-|\alpha|},$$
so that $\|f_\epsilon\|_\ell \le M_{\ell,d} \epsilon^{-(d+\ell)}$.
\end{proof}

\begin{proof}[Proof of Proposition~\ref{prop:measureEst}]

Let us fix some $r\in(0,r_0/2)$, $x_0 \in X$ and define 
$$A(t,x)=\{h \in B(r) : g_t h x\in B^X(x_0,r/2)\}.$$
Observe that $E_1(t,r, K(x_0,r/2),x) \sqcup A(t,x) =B(r)$. Thus, it suffices to prove 
$$\nu(A(t,x)) \ge D r^{m+n}- E e^{-\lambda' t}.$$
We choose $\lambda'>0$ such that $\lambda - (2\ell+m+n+k+km)\lambda' > \lambda'$ and for any $t \ge \frac{1}{\lambda'} \log \frac1r$ we consider the following functions $f_t,\psi_t$. We apply Lemma~\ref{lem:feps} for $G'=H, d=n, r>0$ and $\epsilon=e^{-\lambda' t}$ to obtain $f_t:=f_{e^{-\lambda' t}}$. Then apply Lemma~\ref{lem:feps} for $G'=G, d=m, r/2>0$ and $\epsilon=e^{-\lambda' t}$ to obtain $\psi_t.$ We will identify $\psi_t$ with the function on $X$ given by $g x_0 \to \psi_t(g)$. This is well defined as $g \to g x_0$ is injective on $B^G(2r)$ and $\supp(\psi_t) \subset B^G(2r)$ whenever $t \ge \frac{1}{\lambda'} \log \frac1r$.  From property $(EM)$ we have
\begin{multline*}
\left|\int_Hf_t(h) \psi_t(g_t h x) d\nu(h)-\int_H f_td\nu \int_X\psi_t d\mu \right| \\
\le D\|f_t\|_\ell \|\psi_t\|_\ell \left(\max_{x \in X}\|\nabla \psi_t(x)\| \int_H |f_t| d\nu \right)^k e^{-\lambda t}.
\end{multline*}
So,
\begin{multline*}
\int_Hf_t(h) \psi_t(g_t h x) d\nu(h) \ge \int_H f_td\nu(h) \int_X\psi_t d\mu - D_1 e^{\lambda' t(2\ell+m+n+k+km)-\lambda t}\\
\ge  \int_H f_td\nu \int_X\psi_t d\mu - D_1 e^{-\lambda' t}.
\end{multline*}
Let $f=1_{B(r)}$ and $\psi=1_{B^X(x_0,r/2)}$. Then,
\begin{align*}
\nu(A(t,x)) &=\int_H f(h) \psi(g_t h x) d\nu(h)\\
& \ge \int_H f_t(h) \psi(g_t h x) d\nu(h)-\int |f_t-f| d\nu\\
& \ge \int_H f_t(h) \psi(g_t h x) d\nu(h)-\nu\left(B(r+e^{-\lambda' t})\backslash B(r)\right)\\
& \ge \int_H f_t(h) \psi(g_t h x) d\nu(h)-C_1 e^{-\lambda' t}.
\end{align*}
Similarly using $\psi \ge \psi_t - |\psi_t-\psi|$ we obtain
\begin{align*}
\nu(A(t,x)) & \ge \int_H f_t(h) \psi_t(g_t h x) d\nu(h)-2C_1 e^{-\lambda' t}\\
& \ge  \int_H f_td\nu \int_X\psi_t d\mu - D_1 e^{-\lambda' t}-2C_1 e^{-\lambda' t}\\
& \ge D r^{n+m}- (D_1+2C_1) e^{-\lambda' t}.
\end{align*}
Taking $E=D_1+2C_1$  we obtain the proposition.
\end{proof}

\begin{proof}[Proof of Theorem~\ref{thm:main}]
From Proposition~\ref{prop:exceptionalk} we see that for any $k \in \N$ the set $E_k(t,r/2,K(x_0,r),x)$ (and hence the set $E(r/2,K(x_0,r),x)$) can be covered with at most
\begin{equation}
\sup_{x' \in X} \left(\frac{\nu(E_1(t,r, \partial_{r/2} K(x_0,r), x'))}{\nu(g_{-t}B(r/4) g_t)}\right)^k
\end{equation}
Bowen $(tk,r/2)$-balls. Since $\partial_{r/2} K(x_0,r) = K(x_0,r/2)$, using Proposition~\ref{prop:measureEst} we get that
$$\nu(E_1(t,r, \partial_{r/2} K(x_0,r),x')) \le \nu(B(r)) -  D r^{m+n}+ E e^{-\lambda' t} \text{ for any } x' \in X.$$
We define 
$$\lambda_0=\max \{|\lambda| : \lambda \text{ is an eigenvalue of } {\rm Ad}_{g_1}\}.$$ 
Then it is easy to see that any Bowen $(tk,r/2)$-ball can be covered with 
$$ O\left(  \frac{\nu(g_{-tk} B(r/2) g_{tk})}{\nu(B({r e^{-\lambda_0 t k}}))}\right)=O\left(  \frac{\nu(g_{-t} B(r/4) g_{t})^k}{\nu(B({r e^{-\lambda_0 t }}))^k}\right)$$
balls of radius $r e^{-\lambda_0 t k}$, where the implicit constant does not depend on $t,r,k$.
Hence, the set $E(r/2,K(x_0,r),x)$ can be covered with
$$ O\left(\left(\frac{\nu(B(r)) -  D r^{m+n}+ E e^{-\lambda' t}}{\nu(B({r e^{-\lambda_0 t }}))}\right)^k\right)$$ 
balls of radius $re^{-\lambda_0tk }$. Note that $\nu(B({r e^{-\lambda_0 t}}))=\nu(B(r)) e^{-\lambda_0 n t} =O( r^n e^{-\lambda_0 n t})$. Let $\underline\dim_B$ and $\dim_H$ denote the lower box dimension and the Hausdorff dimension respectively. Then,
\begin{align*}
\underline\dim_B E(r/2,K(x_0,r),x) & \le \lim_{k \to \infty} \frac{k \log \left(\frac{\nu(B(r)) -  D r^{m+n}+ E e^{-\lambda' t}}{\nu(B({r e^{-\lambda_0 t }}))}\right)}{- \log (re^{-\lambda_0t k})} \\
&=\frac{\log \left(e^{\lambda_0 n t} \left(1- D' r^m+E' r^{-n} e^{-\lambda' t} \right)\right)}{\lambda_0 t}\\
&=n+ \frac{\log  \left(1- D' r^m+E' r^{-n} e^{-\lambda' t} \right)}{\lambda_0 t}
\end{align*}
To optimize the estimate we want to choose a suitable $t \ge \frac{1}{\lambda'} \log \frac1r$. We have $r \in (0,r_0)$ and $r_0<1$. Let us pick $p \ge 1$ such that $r^p <r_0^p \le \frac{D'}{2E'}$ and let $t=\frac{m+n+p}{\lambda'}\log \frac1r$. Then, for any $r\in(0,r_0)$ we have
$$E'r^{-n}e^{-\lambda't} = E' r^{-n} r^{m+n+p} < \frac{D'}2 r^m. $$
Thus,
\begin{multline*}\dim_H  E(r/2,K(x_0,r),x) \\
\le \underline\dim_B E(r/2,K(x_0,r),x)  \le n + \frac{\log  \left(1- \frac{D'}2 r^m \right)}{\lambda_0 t} \le n - D'' \frac{ r^m}{ \log \frac1r}. 
\end{multline*}
This finishes the proof.
\end{proof}

\end{document}